\documentclass{article}

\usepackage[
	a4paper,
	margin=1in
]{geometry}
\usepackage{setspace}
\usepackage{sectsty} 
\usepackage{titlesec}
\usepackage{appendix}

\usepackage{amsmath, amssymb, amsthm}
\usepackage{mathtools}

\usepackage[
	setpagesize=false,
	colorlinks=true,
	linkcolor=BrickRed,
        citecolor=OliveGreen,
        urlcolor=black,
	pdfencoding=auto,
	psdextra,
]{hyperref}
\usepackage{cleveref}

\usepackage{etoolbox} 
\usepackage[shortlabels]{enumitem}
\usepackage{xparse} 

\usepackage[dvipsnames]{xcolor}
\usepackage{graphicx, color}
\usepackage{epsfig}
\usepackage{tikz}
\usetikzlibrary{calc,decorations.pathmorphing,decorations.text}
\usepackage{subcaption}
\usepackage{refcount}

\usepackage{todonotes}
\usepackage[
    deletedmarkup=sout,
    commentmarkup=uwave,
    authormarkuptext=name
]{changes}

\usepackage{comment}

\onehalfspacing

\setlist[enumerate,1]{label=(\arabic*), ref=(\arabic*)}
\setlist[enumerate,3]{label=(\roman*), ref=(\roman*)}

\allsectionsfont{\boldmath} 

\titlelabel{\thetitle.\quad}

\theoremstyle{plain}
\newtheorem{theorem}{Theorem}[section]
\newtheorem{lemma}[theorem]{Lemma}

\newtheorem{conjecture}[theorem]{Conjecture}

\newtheorem{claim}[theorem]{Claim}
\newtheorem*{claim*}{Claim}

\makeatletter
\newenvironment{claimproof}[1][Proof]{\par
	\pushQED{\qed}%
	
	\normalfont \topsep6\p@\@plus6\p@\relax
	\trivlist
	\item[\hskip\labelsep
	\textit{#1}\@addpunct{.}~]\ignorespaces
}{%
	\popQED\endtrivlist\@endpefalse
}
\makeatother

\newlist{Cases}{enumerate}{3}
\setlist[Cases]{parsep=0pt plus 1pt}
\setlist[Cases,1]{wide=0pt, listparindent=\parindent,
    label = \textbf{Case~\arabic*:}, ref = \arabic*}
\setlist[Cases,2]{wide=\parindent, listparindent=\parindent,
    label = \textbf{Case~\arabic{Casesi}-\arabic{Casesii}:}}

\crefname{Casesi}{case}{cases}

\newcounter{case}
\AtBeginEnvironment{proof}{\setcounter{case}{0}}

\crefname{case}{case}{cases}

\theoremstyle{definition}


\newcommand{\RR}{\mathbb{R}}

\title{Note on Long Directed Cycles in Eulerian Digraphs}
\author{Jiangdong Ai\thanks{School of Mathematical Sciences and LPMC, Nankai University. {\tt jd@nankai.edu.cn}.}
\hspace{2mm}
Gregory Gutin\thanks{Department of Computer Science, Royal Holloway University of London, {\tt g.gutin@rhul.ac.uk}, and School of Mathematical Sciences and LPMC, Nankai University.}
\hspace{2mm} Fankang He\thanks{School of Mathematical Sciences and LPMC, Nankai University. {\tt  
hefankang@mail.nankai.edu.cn}.}
\hspace{2mm} Anders Yeo\thanks{Department of Mathematics and Computer Science, University of Southern Denmark. {\tt andersyeo@gmail.com}, and Department of Mathematics, University of Johannesburg.}
}
\date{}

\begin{document}

\maketitle

\begin{abstract}
Huang, Ma, Shapira, Sudakov and Yuster (Comb. Prob. Comput. 2013) proved that every Eulerian digraph of average out-degree $d$ has a directed cycle of length at least $\sqrt{d}.$ We improve the lower bound from $\sqrt{d}$ to $\sqrt{2d}-3/2.$ 
\end{abstract}

\section{Introduction}
In this paper, digraphs do not have loops or multiple arcs. 
A digraph is {\em Eulerian} if it is strongly connected and the out-degree of every vertex is equal to its in-degree. 
Bollob\'as and Scott~\cite{Bollobas1996weighted} stated the following conjecture.

\begin{conjecture}[\cite{Bollobas1996weighted}]
    Every $n$-vertex Eulerian digraph $G$ with a weight function $w: A(G) \to \RR_{\geq 0}$ has a directed cycle of length at least $c\,\cdot \, w(G)/n$ for some universal constant $c>0$.
\end{conjecture}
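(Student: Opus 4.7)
The plan is to attack the conjecture by routing paths through a vertex of high weighted out-degree and closing them using the Eulerian structure. Define $w^+(v) = \sum_{e \in \delta^+(v)} w(e)$; since $\sum_v w^+(v) = w(G)$, averaging yields a vertex $v_0$ with $w^+(v_0) \geq w(G)/n$. The aim is to build a simple directed cycle $C$ through $v_0$ whose weight is at least a constant fraction of $w^+(v_0)$, which would give $w(C) \geq c \cdot w(G)/n$.

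Starting from $v_0$, I would run a greedy walk that at each step extends along the heaviest unused out-arc, producing a directed walk $W$ with accumulated weight $w(W)$ boosted by this choice. Because $G$ is Eulerian in the combinatorial sense (in-degree equals out-degree at every vertex, plus strong connectivity), a Hierholzer-style argument guarantees that $W$ extends to a closed walk returning to $v_0$; this closed walk decomposes into simple directed cycles $C_1,\ldots,C_t$ with $\sum_j w(C_j) \geq w(W)$. Taking $C$ to be the $C_j$ of maximum weight gives $w(C) \geq w(W)/t$. The task reduces to showing $t = O(1)$ when $w(W) = \Omega(w^+(v_0))$, i.e.\ that the greedy walk does not fragment into too many short cycles before accumulating its target weight.

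The main obstacle---and the reason the conjecture has stood open since 1996---is controlling $t$. The greedy/DFS framework appears to lose a factor of $\sqrt{d}$: weight can concentrate on arcs that lie in many short cycles but on no long one, and the greedy walk is forced to cover these short sub-cycles before extending further, giving $t = \Omega(\sqrt{d})$. Crossing from $\sqrt{d}$ to the conjectured linear $d$ seems to require abandoning this local framework in favor of a global certificate---for instance, an LP/circulation-duality formulation in which the non-existence of a heavy simple cycle forces a fractional obstruction incompatible with Eulerian balance, or a spectral argument on a suitably weighted transition matrix of $G$ producing long weighted return times. I do not expect the plan above to cross that threshold; it should at best recover the $\sqrt{2d}-3/2$ estimate proved in the present note, and a genuinely new idea appears necessary to settle the full conjecture.
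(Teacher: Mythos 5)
The statement you are trying to prove is the Bollob\'as--Scott conjecture, which the paper records precisely as an open \emph{conjecture}: there is no proof of it in the paper (the note only proves the much weaker unweighted bound $\sqrt{2d}-3/2$ of Theorem~\ref{Thm:main}), so there is nothing to compare your argument against, and your proposal does not close the gap either --- as you yourself concede in the final paragraph. Concretely, the missing step is the bound on $t$, the number of simple cycles in the decomposition of your closed walk $W$. Every ingredient before that point is sound but contentless: averaging gives $w^+(v_0)\geq w(G)/n$; the Eulerian degree condition does let any walk be extended to a closed walk at $v_0$; and a closed walk does decompose into simple cycles with $w(C)\geq w(W)/t$ for the heaviest one. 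But $t$ is generically $\Theta(d)$, not $O(1)$: if the arcs leaving $v_0$ lie on $d^+(v_0)$ arc-disjoint short cycles through $v_0$ (e.g.\ many triangles sharing $v_0$), the closed walk exhausting $w^+(v_0)$ fragments into $d^+(v_0)$ pieces and $w(W)/t$ collapses to the average weight of a single short cycle, which proves nothing. Greedy arc selection does not prevent this, since the adversary can place all the weight on exactly those short cycles. So the reduction ``find a heavy closed walk, then pigeonhole over its cycle decomposition'' loses precisely the factor that the conjecture asks you to save, and no choice of walk repairs this without a genuinely new idea for certifying that some \emph{single} cycle is heavy.

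It is also worth flagging a reading issue you implicitly (and correctly) resolved: the conjecture's phrase ``cycle of length at least $c\cdot w(G)/n$'' must be interpreted as weighted length $w(C)=\sum_{e\in A(C)}w(e)$, not number of arcs --- otherwise concentrating all weight on a single triangle gives an immediate counterexample. Under that reading, the unweighted specialization ($w\equiv 1$) is exactly Conjecture~1.2, for which the best known bound remains $\Theta(\sqrt{d})$. Your proposal, if pushed through in the unweighted case, would at best reprove a $\sqrt{d}$-type bound by balancing ``few long cycles'' against ``many short ones,'' whereas the paper's Theorem~\ref{Thm:main} gets its (still only $\sqrt{2d}-3/2$) bound by an entirely different route: a final out-branching, Lemma~\ref{Lem:back_arc} forcing all level-descending arcs to be back arcs, and a counting argument via Lemma~\ref{Lem:Euler_cut} on the cuts $d^+(L_{\leq i})$. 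Neither route is known to reach the conjectured linear bound.
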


Huang, Ma, Shapira, Sudakov and Yuster~\cite{Huang2013feedback} suggested trying to resolve the following two weaker versions of Bollob\'as-Scott conjecture as the first step in its resolution. 

\begin{conjecture}
    Every Eulerian digraph with average out-degree $d$ has a directed cycle of length at least $cd$ for some universal constant $c>0$.
\end{conjecture}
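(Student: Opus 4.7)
The conjecture demands a lower bound on the longest directed cycle that is quadratically stronger than the best known $\sqrt{d}$ bound, so any route to a proof would be a substantial advance. My plan is to reformulate the statement as a cleaner extremal problem and then attack it by iterative cycle extraction combined with a slow-decay analysis.

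Let $G$ be Eulerian on $n$ vertices with average out-degree $d$, so $|A(G)|=nd$, and denote by $\ell^*(H)$ the length of a longest directed cycle in a digraph $H$. Since removing the arcs of a directed cycle from an Eulerian digraph preserves the property that in-degree equals out-degree at every vertex, one can iteratively strip cycles $C_1, C_2, \ldots, C_k$ from $G$ until no arcs remain, obtaining $\sum_{i=1}^k |C_i| = nd$. The conjecture is equivalent to the extremal statement: \emph{any Eulerian digraph on $n$ vertices whose longest directed cycle has length $\ell$ has at most $O(n\ell)$ arcs}. The current $\sqrt{d}$ bound corresponds to the weaker $O(n\ell^2)$ extremal count on arcs, so the content of the conjecture is genuinely the quadratic improvement.

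My proposed first step is to prove a slow-decay lemma for greedy extraction: if at each stage $C_i$ is chosen as a longest cycle in the residual digraph $G_{i-1} = G \setminus (A(C_1) \cup \cdots \cup A(C_{i-1}))$, then $\ell^*(G_i) \geq (1-o(1)) \, \ell^*(G_{i-1})$ provided only a small fraction of arcs have been removed. The intuition is that any cycle $C'$ substantially longer than what remains after deletion would allow a splicing with $C_i$ that contradicts the maximality of $C_i$. Combined with $\sum_i |C_i| = nd$, a sufficiently strong slow-decay lemma would force every extracted cycle in the first $\Omega(n)$ iterations to have length $\Omega(\ell^*(G))$, and balancing this against the arc count would pin down $\ell^*(G) = \Omega(d)$.

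The main obstacle is precisely this slow-decay lemma: in general, a single cycle deletion can drastically reduce the longest remaining cycle, because a longest cycle can intersect every other long cycle in uncontrollable ways, and the naive splicing argument breaks down when two long cycles share many arcs. Overcoming this will likely require either a randomized selection rule for the $C_i$ combined with a concentration argument for the \emph{expected} decay, or a preliminary reduction to an arc-minimum Eulerian subdigraph with the same longest-cycle length, in which the cycle structure is rigid and no single cycle is structurally critical. Bridging the quadratic gap between $\sqrt{d}$ and $d$ almost certainly demands a genuinely new combinatorial tool — perhaps a spectral or entropy-based counting argument — on top of the iterative-extraction scaffold sketched above, which is why I expect this conjecture to remain the central open problem motivating the present note rather than one we can dispatch with incremental techniques.
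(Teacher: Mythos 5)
The statement you were asked to prove is one of the open conjectures of Huang, Ma, Shapira, Sudakov and Yuster that this note records purely as motivation; the paper does not prove it, and its actual contribution is only the improvement of the cycle-length lower bound from $\sqrt{d}$ to $\sqrt{2d}-3/2$ via final out-branchings and the cut identity $d^+(W)=d^-(W)$. So there is no proof in the paper to compare yours against, and your proposal is (commendably) explicit that it is a programme rather than a proof. Your preliminary reductions are sound: the equivalence with the extremal statement that an Eulerian digraph of circumference $\ell$ has $O(n\ell)$ arcs is a correct if routine reformulation, and the arc-disjoint cycle decomposition with $\sum_i |C_i| = nd$ is standard, since deleting a cycle preserves the balance condition and every weakly connected component of a balanced digraph with arcs is strongly connected.

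The genuine gap is the slow-decay lemma, and it is not a gap one closes by being more careful: as stated it is false. Take a directed cycle $C$ of length $L$ on vertices $v_1,\dots,v_L$ and attach to each $v_i$ some $k$ directed triangles through otherwise new vertices. The resulting digraph is Eulerian, its circumference is $L$ (any cycle leaving $C$ at $v_i$ must return to $v_i$ and hence repeat a vertex), and deleting $A(C)$ removes only a $1/(1+3k)$ fraction of the arcs while collapsing the residual circumference from $L$ to $3$. The splicing intuition fails for exactly the reason this conjecture is hard: two arc-disjoint cycles meeting in a vertex concatenate into a long closed \emph{walk}, not a long cycle, and a closed walk need not contain any long cycle. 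Nothing in the iterative-extraction scaffold addresses this, and the proposed remedies (randomized selection, arc-minimal reduction, spectral or entropy counting) are named but not developed. The proposal therefore does not establish the conjecture; the problem remains open, with $\sqrt{2d}-3/2$ (this paper) the best known bound.
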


\begin{conjecture}
    Every Eulerian digraph with average out-degree $d$ has a directed path of length at least $cd$ for some universal constant $c>0$. 
\end{conjecture}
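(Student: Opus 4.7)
The plan is to begin with the classical longest-path argument that already resolves the conjecture in the $d$-regular Eulerian case, and then to confront the obstacles in the irregular case. Let $P = v_0 v_1 \cdots v_\ell$ be a longest directed path. Since $P$ cannot be extended at either end, $N^+(v_\ell) \subseteq V(P) \setminus \{v_\ell\}$ and $N^-(v_0) \subseteq V(P) \setminus \{v_0\}$, giving $d^+(v_\ell), d^-(v_0) \leq \ell$. In a $d$-regular Eulerian digraph this immediately forces $\ell \geq d$, so the real content of the conjecture lies in the irregular setting, where both endpoints of every longest path may have degree far below the average $d$.

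A cheap lower bound in the general case is immediate from the cycle version: any directed cycle of length $k$ contains a simple directed path of length $k-1$, so the main theorem of this paper yields a path of length at least $\sqrt{2d} - 5/2$. To push toward the conjectured $\Omega(d)$, I would combine this with the Eulerian arc-decomposition into directed cycles $C_1, \ldots, C_k$ of total length $dn$: two cycles meeting at a single common vertex $v$ splice at $v$ into a simple path of length $|C_1|+|C_2|-2$, and iterating at further junctions could in principle chain many cycles into a single long path. Since each vertex $v$ lies in exactly $d(v)$ cycles of the decomposition, a vertex with $d(v) \geq d$ offers many junctions to splice at.

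A parallel direction is a rotation-type argument. Unlike the undirected P\'osa rotation, arc directions block a naive rotation in digraphs, but the Eulerian flow identity---the number of arcs leaving any vertex set $S$ equals the number entering $S$---can substitute as a source of structural constraints. Assuming $\ell$ is small, one could try to show that most arcs lie on short cycles concentrated near a small portion of the vertex set, which by a counting argument against the total of $dn$ arcs contradicts the assumed average out-degree.

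The hardest part, in either approach, is maintaining the simplicity of the path while assembling it from local pieces: greedy cycle splicing typically yields closed walks rather than simple paths, and any rotation-style move in a digraph tends to revisit interior vertices rather than produce a longer simple path. This is the same obstacle that keeps even the cycle version open---pushed in this paper only from $\sqrt{d}$ to $\sqrt{2d}-3/2$---so I would not expect a short self-contained proof of the full $\Omega(d)$ bound. A realistic first target is instead a path bound of order $\sqrt{d}$ (or slightly better) matching the cycle bound of this paper, by adapting its techniques from the cycle setting to the path setting.
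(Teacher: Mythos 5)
This statement is not a theorem of the paper but an open conjecture (due to Huang, Ma, Shapira, Sudakov and Yuster) that the paper merely records as motivation; the paper contains no proof of it, and neither does your proposal. You are candid about this, and your assessment of the state of the art is essentially accurate: the easy consequence of the paper's main theorem is a directed path of length $\sqrt{2d}-5/2$ (a cycle of length $k$ contains a path of length $k-1$), and the paper's concluding remark sharpens this slightly to a path of length $\sqrt{2d}-3/2$ starting at \emph{any} prescribed vertex, by rerunning the final-out-branching counting argument. Your ``realistic first target'' of a $\sqrt{d}$-order path bound is therefore exactly what the paper achieves; the genuinely stronger path bounds quoted in the introduction ($d^{1/2+1/40}$ by Janzer--Sudakov--Tomon and $d/(\log d+1)$ by Knierim--Larcher--Martinsson) require substantially more machinery than anything sketched here.

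The concrete gaps in your speculative directions are worth naming. The regular case is fine: if $D$ is $d$-out-regular, the endpoint argument on a longest path gives $\ell\ge d$ directly. But the cycle-splicing idea does not survive contact with simplicity: an Eulerian arc-decomposition into cycles lets you splice two cycles sharing a vertex into a closed walk or a longer closed trail, not into a longer \emph{simple} path, and once three or more cycles are chained the resulting walk revisits vertices in ways that no local surgery repairs. You identify this obstacle yourself, but you do not propose any mechanism to overcome it, so the paragraph is a research direction rather than a proof step. Likewise the ``rotation-type'' paragraph invokes the Eulerian cut identity $d^+(W)=d^-(W)$ (Lemma 2.2 of the paper) without specifying what set $W$ to apply it to or what contradiction is derived; the paper's own use of that identity, applied to the level sets $L_{\le i}$ of a final out-branching, is precisely what yields the quadratic count $nt(t+1)/2$ on forward arcs and hence only a $\sqrt{d}$-type bound, and nothing in your sketch explains how to beat that barrier. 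In short: the conjecture remains open, your proposal proves nothing beyond what follows trivially from the paper's Theorem 1.4, and the one verifiable claim you make (the $\sqrt{2d}-5/2$ path bound) is correct but weaker than the paper's own Theorem 3.1.
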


The authors of~\cite{Huang2013feedback} proved that every $n$-vertex Eulerian digraph with average out-degree $d$ has a directed cycle of length at least $\sqrt{d}$, and hence it has a directed path of length $\sqrt{d}-1$. 
Janzer, Sudakov and Tomon~\cite{Janzer2021Eulerian} first broke the $\sqrt{d}$ barrier for paths by showing the existence of a directed path of length at least $d^{1/2 + \epsilon}$, where $\epsilon = 1/40$. 
In fact, they proved a slightly stronger result that every vertex can start such a directed path. 
Subsequently, Knierim, Larcher and Martinsson~\cite{Knierim2021paths} gave an almost linear bound by showing that there is a directed path of length at least $d/(\log d + 1)$ in such digraphs.

However, for the existence of long cycles, $\sqrt{d}$ obtained in $\cite{Huang2013feedback}$ is still the best-known lower bound. 
If we take maximum degree into account, Knierim, Larcher, Martinsson and Noever~\cite{Knierim2021cycles} proved that an Eulerian digraph $D$ with average out-degree $d$ and maximum degree $\Delta$ has a directed cycle of length at least $\Omega(d/\log \Delta)$ by using a random walk technique. 
Moreover, they remarked that it seems likely that an $\Omega(d/\log d)$ bound can be obtained by using their approach more carefully. 
However, no existing paper does this.

The authors of~\cite{Huang2013feedback} recall the argument using the DFS tree to find long cycles in undirected graphs and remarked that simply using such an argument in digraph settings will fail. They also pointed out that a possible reason that this approach fails is the fact that not all non-tree arcs are \emph{back arcs}, that is, arcs directed from vertices to their ancestors. 
In this note, we use a \emph{final out-branching} instead of a DFS tree to give the following result which improves the multiplicity constant of the result of Huang, Ma, Shapira, Sudakov and Yuster~\cite{Huang2013feedback}. 

\begin{theorem}\label{Thm:main}
    Let $D$ be an $n$-vertex Eulerian digraph with average out-degree $d$. Then there is a directed cycle of length at least $\sqrt{2d}-3/2$.
\end{theorem}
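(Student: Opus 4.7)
The plan is to adapt the classical DFS-tree argument for longest cycles in undirected graphs to the directed Eulerian setting, using a specific out-branching $T$ of $D$ --- the \emph{final out-branching} --- in place of the DFS tree. In the undirected case, every non-tree edge of a DFS tree is a back edge; each such edge closes, via the tree path to its ancestor endpoint, into a cycle of length at most $L$ (the longest cycle length), so summing the induced cycle bounds controls the total edge count in terms of $L$. The failure of this argument for a general digraph DFS tree is the presence of cross and forward arcs --- exactly the issue flagged by the authors in the excerpt. The Eulerian hypothesis is what permits a better-behaved out-branching.

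I would first fix a root $r \in V(D)$ and construct the final out-branching $T$ via an Eulerian-circuit-based procedure: for each non-root vertex $v$, take its tree-parent to be the source of the last in-arc of $v$ in an Eulerian circuit of $D$ starting at $r$. This construction is a direct analog of the ``final arc'' construction underlying the BEST theorem. The critical property I would then establish is the structural lemma: for each vertex $v$, every non-tree out-arc from $v$ can be associated, via tree arcs of $T$, with a directed cycle of length at most $L$; consequently the number of non-tree out-arcs from $v$ is bounded by $\min\bigl(\mathrm{depth}_T(v),\, L-1\bigr)$. Here the Eulerian property and the ``final'' ordering together prevent the appearance of uncontrolled cross/forward arcs.

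Given the structural lemma and the identity $\sum_v c_T(v) = n-1$ (where $c_T(v)$ denotes the number of children of $v$ in $T$), the counting proceeds as
\[
    nd \;=\; \sum_v d^+(v) \;\leq\; (n - 1) \;+\; \sum_v \min\bigl(\mathrm{depth}_T(v),\, L - 1\bigr).
\]
The depth $h$ of $T$ satisfies $h \leq L$ since the root-to-deepest-leaf path of $T$ together with any closing non-tree arc yields a cycle of length at most $L$. Combining this with the structural bound and a careful arithmetic analysis of the depth-dependent sum --- in particular, that the contributions $\min(\mathrm{depth}_T(v), L-1)$ telescope into a triangular sum on the root-to-deepest-leaf path --- gives an inequality of the form $2d \leq L^2 + 3L + O(1)$, which rearranges to $L \geq \sqrt{2d} - 3/2$.

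The main obstacle is the structural lemma: showing that non-tree arcs in the final out-branching close into short cycles, despite the failure of this property for general digraph DFS trees. The Eulerian hypothesis is essential --- without it, cross and forward arcs are unavoidable --- as is the BEST-style ``final'' construction. The sharpening of the multiplicative constant from $1$ in Huang et al.'s $\sqrt{d}$ bound to $\sqrt{2}$ here comes from this refined accounting of the non-tree out-arcs enabled by the final out-branching.
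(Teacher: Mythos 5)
There is a genuine gap, and it sits exactly at your ``structural lemma.'' You claim that in the final out-branching every non-tree out-arc of $v$ closes into a cycle of length at most $L$ via tree arcs, so that $v$ has at most $\min(\mathrm{depth}_T(v), L-1)$ non-tree out-arcs. If that were true, your own displayed inequality would give $nd \leq (n-1) + n(L-1)$, i.e.\ $L \geq d$ --- a linear bound that would resolve the Huang--Ma--Shapira--Sudakov--Yuster conjecture outright, which should be a warning sign. The claim is false: no choice of out-branching (final, BEST-style, or otherwise) eliminates \emph{forward} arcs, i.e.\ arcs from a vertex at a lower level to a vertex at a strictly higher level that is not its descendant. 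Such arcs do not close into cycles through tree paths and are not controlled by any per-vertex depth bound; they are the whole difficulty the paper is organized around. What the final out-branching (obtained in the paper by iterated re-parenting, not by the Eulerian-circuit/last-in-arc construction you describe) actually guarantees is only the weaker statement that every arc directed from an upper level to a lower or equal level is a back arc. Your proposal has no mechanism at all for counting the remaining forward arcs, and your arithmetic is internally inconsistent: the sum $\sum_v \min(\mathrm{depth}_T(v), L-1)$ is at most $n(L-1)$, so it cannot produce the quadratic inequality $2d \leq L^2 + 3L + O(1)$ that you need.

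The missing idea is a second, global counting argument for the forward arcs, and this is precisely where the Eulerian hypothesis enters quantitatively. Writing $L_{\leq i}$ for the union of the first $i+1$ levels, the Eulerian cut identity $d^+(L_{\leq i}) = d^-(L_{\leq i})$ lets one bound the forward arcs leaving $L_{\leq i}$ by the back arcs entering it; since a back arc from level $i+j$ reaches down at most $t-1$ levels (else it closes a cycle longer than $t$), each vertex contributes at most $\sum_{j=1}^{t} (t+1-j) = t(t+1)/2$ to the total over all $i$, giving at most $nt(t+1)/2$ forward arcs. Combined with the $n(t-1)$ bound on back arcs (which is the part of your argument that is correct in spirit), this yields $nd \leq n(t-1) + nt(t+1)/2$ and hence $t \geq \sqrt{2d} - 3/2$. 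Without this prefix-cut argument your proof does not close.
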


\section{Proofs}
An {\em out-tree} $T$ in a digraph $D$ is an oriented tree in which every vertex, apart from one called the {\em root} of $T$, has in-degree 1. An {\em out-forest} is a disjoint union of out-trees and an {\em out-branching} of $D$ is a spanning out-tree of $D$. 

Let $D$ be a Eulerian digraph. Since $D$ is strongly connected, it has an out-branching $F_0$ (see, e.g., Proposition 1.8.1 in \cite{BangJensenGutin2018}). 
For each out-branching of $D$ rooted at $v_0$, we say a vertex $u$ is in \emph{level $i$}, $L_i$, if the unique directed path in $F_0$ from $v_0$ to $u$ is of length $i$. 
If $L_i$ and $L_j$ are two levels and $i<j$ then we say that
$L_i$ is a \emph{lower level} and $L_j$ is an \emph{upper level}. 

The notion of a \emph{final spanning out-forest} was explicitly introduced by El Sahili and Kouider~\cite{Sahili2007blocks} to solve a special case of Burr's conjecture on oriented paths with two blocks. (The term final spanning out-forest was coined in \cite{ABHT2007}.)
Let us recall the notion to make this note self-contained. For the same reason, we give proofs of the following lemmas. 
Let $F$ be an out-branching of $D$. Suppose $D$ has an arc $uv$ that is not a back arc, where $u$ lies at level $i$, $v$ lies at level $j$ and $i \geq j$. 
Then we can obtain a new out-branching $F'$ of $D$ by adding arc $u$ removing the arc of $F$ with head $v$. 
We call above operation an \emph{elementary operation}. 
Note that we can iteratively apply elementary operations on out-branchings of $D$ until no such arcs exist, since each time we apply elementary operation there is a vertex going to an upper level. 
An out-branching is a \emph{final out-branching} if we can not apply the elementary operation on it. 

By the definition of a final out-branching, we have the following lemma guaranteeing that all arcs sending from upper levels to lower levels are back arcs. Recall that an arc $uv$ in a digraph $D$ with out-branching $F$ is a {\em back arc} if $u$ is in an upper level than $v$ and there is a directed path in $F$ from $u$ to $v.$ 
\begin{lemma}\label{Lem:back_arc}
    Let $F$ be a final out-branching of $D$. Then all arcs directed from upper levels to lower levels are back arcs. Moreover, each level is an independent set. 
\end{lemma}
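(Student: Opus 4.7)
The plan is to prove both conclusions in one sweep by showing that every arc $uv$ of $D$ with $u$ at level $i$ and $v$ at level $j$ satisfying $i \geq j$ must in fact be a back arc. Since a back arc requires its tail to sit strictly above its head, this immediately yields both the upper-to-lower statement and the independence of each level, so no separate argument for the ``moreover'' clause will be needed.

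I would first dispose of the boundary case $v = v_0$. Because $v_0$ is the unique vertex at level $0$ and $D$ is loopless, every arc into $v_0$ has its tail $u$ at some level $i \geq 1$, and the unique $F$-path from $v_0$ to $u$ realises $v_0$ as an ancestor of $u$; hence $uv_0$ is automatically a back arc. As a byproduct, this case also rules out arcs inside level $0$.

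For the main case $v \neq v_0$, I would suppose for contradiction that $uv$ is not a back arc and that $i \geq j$. Let $e$ be the unique in-arc of $v$ in $F$ and set $F' := (F \setminus \{e\}) \cup \{uv\}$. I would then verify that $F'$ is again an out-branching of $D$ rooted at $v_0$: in-degrees are preserved everywhere, and a new directed cycle could only appear in $F'$ if $u$ were already reachable from $v$ in $F$, i.e., if $u$ were a descendant of $v$; but that would make $uv$ a back arc, contradicting our assumption. In $F'$, the unique directed path from $v_0$ to $v$ routes through $u$ and has length $i+1 > j$, so $v$ is pushed strictly higher. This is precisely an elementary operation applied to $F$, contradicting the finality of $F$.

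The one point of care worth flagging is the verification that $F'$ really is an out-branching and that $v$'s new level is exactly $i+1$; both follow transparently from the tree structure of $F$ combined with the fact that $u$ is not a descendant of $v$, so I do not expect a genuine obstacle here.
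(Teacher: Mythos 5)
Your proposal is correct and follows essentially the same route as the paper: the paper treats this lemma as an immediate consequence of the definition of a final out-branching (any non-back arc $uv$ with $\mathrm{level}(u)\ge\mathrm{level}(v)$ would permit an elementary operation), and your argument is exactly that, merely spelling out the verification that the swapped tree $F'$ is again an out-branching and handling the root case explicitly. The extra care you take is harmless and fills in details the paper leaves implicit.
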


Given a digraph $D$ and $W \subseteq V(D)$, we use $d^+(W)$ to denote the number of arcs from $W$ to $V(D)\setminus W$ and $d^-(W)$ to denote the number of arcs from $V(D)\setminus W$ to $W$. The following lemma is well known, see, e.g., Proposition 4.1.1 in \cite{BangJensenGutin2018}.
\begin{lemma}\label{Lem:Euler_cut}
    Let $D$ be an Eulerian digraph and $W \subseteq V(D)$, then $d^+(W) = d^-(W)$.  
\end{lemma}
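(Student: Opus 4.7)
The plan is to reduce the set-level identity to the vertex-level Eulerian condition by a straightforward double-counting argument. Since $D$ is Eulerian, every vertex $v\in V(D)$ satisfies $d^+(v)=d^-(v)$, so summing this equality over all vertices in $W$ gives
\[
\sum_{v\in W} d^+(v) \;=\; \sum_{v\in W} d^-(v).
\]

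Next I would interpret each side by partitioning arcs according to whether their endpoints lie inside or outside $W$. Let $a(W)$ denote the number of arcs of $D$ with both endpoints in $W$. Every arc counted in $\sum_{v\in W} d^+(v)$ either stays inside $W$ (contributing to $a(W)$) or leaves $W$ (contributing to $d^+(W)$), and, since $D$ has no loops or multiple arcs, each such arc is counted exactly once by its tail. Hence $\sum_{v\in W} d^+(v) = a(W) + d^+(W)$. Symmetrically, counting by heads gives $\sum_{v\in W} d^-(v) = a(W) + d^-(W)$.

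Combining these three equalities, the term $a(W)$ cancels and we obtain $d^+(W)=d^-(W)$, as required. There is no real obstacle here: the only thing to be mildly careful about is confirming that each internal arc of $W$ contributes exactly once to each sum (its tail lies in $W$ for the out-sum, its head lies in $W$ for the in-sum), which is immediate from the assumption that $D$ has no loops or parallel arcs.
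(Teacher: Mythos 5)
Your proof is correct and is essentially identical to the paper's: both decompose $\sum_{v\in W}d^+(v)$ and $\sum_{v\in W}d^-(v)$ into the internal arcs of $W$ (your $a(W)$, the paper's $|A(D[W])|$) plus the outgoing/incoming cut arcs, and then cancel the internal term using the vertex-level Eulerian condition. No differences worth noting.
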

\begin{proof}
    Note that 
    \[
    d^+(W) = \sum_{v \in W} d^+(v) - |A(D[W])|,
    \]
    and 
    \[
    d^-(W) = \sum_{v \in W} d^-(v) - |A(D[W])|.
    \]
    Since $D$ is Eulerian, we have $d^+(x) = d^-(x)$ for every $x \in V(D)$. This yields $d^+(W) = d^-(W)$. 
\end{proof}

\begin{proof}[Proof of Theorem~\ref{Thm:main}]
    Suppose the longest directed cycle in $D$ is of length $t$. Take $F$ as a final out-branching of $D$, then the number of back arcs is at most $n(t-1)$, since each vertex can send at most $t-1$ arcs to its ancestors 
    otherwise there will be a directed cycle of length at least $t+1$ which contradicts the fact that the circumference of $D$ is $t$. 
    We call the arcs of $D$ from lower levels of $F$ to upper levels of $F$ \emph{forward arcs}. Note that the arcs in $F$ are also forward arcs. 
    \begin{claim}
        The number of forward arcs is at most $nt(t+1)/2$.
    \end{claim}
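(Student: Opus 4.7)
The plan is to apply \Cref{Lem:Euler_cut} to every ``level-prefix'' of $F$ and telescope the resulting identities. Concretely, for each $k \geq 0$ let $W_k = L_0 \cup L_1 \cup \cdots \cup L_k$. By \Cref{Lem:back_arc}, every arc leaving $W_k$ is a forward arc (from a level $\leq k$ to a level $> k$) and every arc entering $W_k$ is a back arc in the opposite direction. Thus the Eulerian cut identity $d^+(W_k) = d^-(W_k)$ says that across each level-interface, the forward and back arcs crossing it are equinumerous.

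Summing these identities over all $k$, each forward (resp.\ back) arc whose endpoints differ by $\ell$ levels is counted exactly $\ell$ times, and so I would obtain
\[
\sum_{\text{forward arcs } uv}\bigl(\mathrm{level}(v) - \mathrm{level}(u)\bigr) \;=\; \sum_{\text{back arcs } uv}\bigl(\mathrm{level}(u) - \mathrm{level}(v)\bigr).
\]
Since every forward arc contributes at least $1$ to the left-hand side, the number of forward arcs is at most this common value, so it suffices to upper bound the total length of all back arcs.

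For this bound I would invoke the circumference constraint directly. A back arc from $u$ at level $i$ to its ancestor $v$ at level $j$, together with the $F$-path from $v$ to $u$, closes a directed cycle of length $i - j + 1$; since $D$ has circumference $t$, this forces every back arc to have length at most $t-1$. Moreover, each vertex $u$ has at most one ancestor per lower level, so the back arcs emanating from $u$ have pairwise distinct lengths drawn from $\{1, 2, \ldots, t-1\}$. Their lengths therefore sum to at most $1 + 2 + \cdots + (t-1) = t(t-1)/2$, and summing over the $n$ vertices gives a total back-arc length of at most $nt(t-1)/2 \leq nt(t+1)/2$, which closes the argument.

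The step I expect to require an idea, rather than routine computation, is the telescoping: a single application of \Cref{Lem:Euler_cut} at one level-cut only relates forward to back arcs across that interface and gives no control on the \emph{total} number of forward arcs. It is the simultaneous use of all prefix cuts, converting ``count of forward arcs'' into ``total vertical distance traversed'', that makes the per-vertex circumference bound strong enough to finish.
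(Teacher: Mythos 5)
Your proof is correct and takes essentially the same route as the paper: both arguments sum the identity of Lemma~\ref{Lem:Euler_cut} over all level-prefixes $L_{\leq k}$, use Lemma~\ref{Lem:back_arc} to see that the arcs crossing each such cut are exactly the forward arcs (outgoing) and back arcs (incoming), and then bound the back-arc side using the circumference constraint. Your per-vertex accounting of back-arc lengths even gives the marginally sharper bound $nt(t-1)/2$, but the underlying telescoping of prefix cuts is the same as in the paper.
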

    \begin{claimproof}
        Let $L_i$ be the set of vertices at  $i$, and $L_{\leq i} = \bigcup_{0\leq j \leq i} L_j$. 
        By Lemma~\ref{Lem:Euler_cut} and Lemma~\ref{Lem:back_arc}, 
        \[
        d^+(L_{\leq i}) = d^-(L_{\leq i}) \leq \sum_{j=1}^t (t+1-j)|L_{i+j}|.
        \]
        Then the number of forward arcs is at most
        \[
        \sum_{i=0}^\infty d^+(L_{\leq i}) \leq \sum_{i=0}^\infty \sum_{j=1}^t (t+1-j)|L_{i+j}| \leq \sum_{i=0}^\infty \sum_{j=1}^t j |L_i| = \frac{t(t+1)}{2} \sum_{i=0}^\infty |L_i| \leq \frac{t(t+1)}{2} n.  \qedhere
        \]
    \end{claimproof}
    
    Hence,
    \[
    nd = |A(D)| \leq  n(t-1) + nt(t+1)/2,
    \]
    which implies that $t \geq \sqrt{2d}-3/2$. 
\end{proof}

\section{Concluding remarks}
By a similar argument, we can prove the following weaker version of the result of Janzer, Sudakov and Tomon~\cite{Janzer2021Eulerian} mentioned in Introduction. 

\begin{theorem}
    Let $D$ be an $n$-vertex Eulerian digraph with average out-degree $d$. Then for any vertex $v \in V(D)$, there exists a directed path of length at least $\sqrt{2d}-3/2$ starting at $v$.
\end{theorem}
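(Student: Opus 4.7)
The plan is to repeat the argument for Theorem~\ref{Thm:main} with one additional structural observation. Write $t$ for the length of a longest directed path in $D$ starting at the designated vertex $v$; the goal is to show $t\ge\sqrt{2d}-3/2$.

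The new ingredient I would prove first is that every directed cycle $C$ in $D$ has length at most $t+1$. To see this, I would use strong connectivity of $D$ to pick a shortest directed path $P$ from $v$ to $V(C)$, ending at some $y\in V(C)$. Since $P$ is shortest, every internal vertex of $P$ lies outside $V(C)$; concatenating $P$ with the traversal of the $|C|-1$ consecutive arcs of $C$ starting at $y$ then gives a simple directed path from $v$ of length at least $|C|-1$, which forces $|C|\le t+1$.

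With this cycle bound in hand, I would take $F$ to be a final out-branching of $D$ rooted at $v$ (which exists by strong connectivity) and apply Lemma~\ref{Lem:back_arc}, so that all arcs from upper to lower levels are back arcs and each level is an independent set. A back arc from a vertex $u$ at level $\ell_u$ to an ancestor at level $\ell_w$ closes a cycle of length $\ell_u-\ell_w+1\le t+1$, forcing $\ell_w\ge \ell_u-t$; this gives at most $t$ back arcs per vertex and hence at most $nt$ back arcs in total. For forward arcs I would follow the claim in the proof of Theorem~\ref{Thm:main} verbatim: the same cycle bound yields $d^{-}(L_{\le i})\le\sum_{j=1}^{t}(t+1-j)|L_{i+j}|$, and summing over $i$ via Lemma~\ref{Lem:Euler_cut} gives at most $nt(t+1)/2$ forward arcs.

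Since every arc of $D$ is either a back arc or a forward arc,
\[
nd = |A(D)| \le nt + \frac{nt(t+1)}{2} = \frac{nt(t+3)}{2},
\]
so $2d\le t^{2}+3t$, and therefore $t\ge\sqrt{2d+9/4}-3/2\ge\sqrt{2d}-3/2$. The one real novelty beyond the proof of Theorem~\ref{Thm:main} is the cycle-length bound $|C|\le t+1$; this is where I expect the main (mild) obstacle to lie, and the rest is essentially a direct transcription.
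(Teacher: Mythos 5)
Your proposal is correct, and it fills in exactly the argument the paper leaves implicit (the paper only says ``by a similar argument'' and gives no proof): root a final out-branching at $v$, bound every cycle length by $t+1$ via a shortest path from $v$ to the cycle, and rerun the back-arc/forward-arc count. The only detail worth making explicit is that elementary operations never move the root (every arc into the root is a back arc), so a final out-branching rooted at $v$ indeed exists; with that noted, the computation $nd\le nt+nt(t+1)/2$ and the conclusion $t\ge\sqrt{2d}-3/2$ go through as you wrote them.
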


\bibliographystyle{abbrv}
\bibliography{bibfile}

\begin{thebibliography}{1}

\bibitem{ABHT2007}
L.~Addario-Berry, F.~Havet, and S.~Thomass\'e.
\newblock Paths with two blocks in $n$-chromatic digraphs.
\newblock {\em Journal of Combinatorial Theory, Series B}, 97(4):620--626, 2007.

\bibitem{BangJensenGutin2018}
J.~Bang-Jensen and G.~Gutin.
\newblock {\em Classes of Directed Graphs}.
\newblock Springer Monographs in Mathematics. Springer, Cham, Switzerland, 2018.

\bibitem{Bollobas1996weighted}
B.~Bollob\'as and A.~D. Scott.
\newblock A proof of a conjecture of {B}ondy concerning paths in weighted digraphs.
\newblock {\em J. Combin. Theory Ser. B}, 66(2):283--292, 1996.

\bibitem{Sahili2007blocks}
A.~El~Sahili and M.~Kouider.
\newblock About paths with two blocks.
\newblock {\em J. Graph Theory}, 55(3):221--226, 2007.

\bibitem{Huang2013feedback}
H.~Huang, J.~Ma, A.~Shapira, B.~Sudakov, and R.~Yuster.
\newblock Large feedback arc sets, high minimum degree subgraphs, and long cycles in {E}ulerian digraphs.
\newblock {\em Combin. Probab. Comput.}, 22(6):859--873, 2013.

\bibitem{Janzer2021Eulerian}
O.~Janzer, B.~Sudakov, and I.~Tomon.
\newblock Long directed paths in {E}ulerian digraphs, 2021.
\newblock arXiv:2101.11601.

\bibitem{Knierim2021paths}
C.~Knierim, M.~Larcher, and A.~Martinsson.
\newblock Note on long paths in {E}ulerian digraphs.
\newblock {\em Electron. J. Combin.}, 28(2):Paper No. 2.37, 4, 2021.

\bibitem{Knierim2021cycles}
C.~Knierim, M.~Larcher, A.~Martinsson, and A.~Noever.
\newblock Long cycles, heavy cycles and cycle decompositions in digraphs.
\newblock {\em J. Combin. Theory Ser. B}, 148:125--148, 2021.

\end{thebibliography}
\label{key} 

\end{document}